\numberwithin{equation}{section}
\renewcommand\Im{{\rm Im\,}}
\renewcommand\Re{{\rm Re\,}}
\newcommand\Pv{{\rm Pv}}
\newcommand{\cc}{{\cal C}}
\newcommand{\e}{{\rm e}}
\renewcommand{\i}{{\rm i}}
\renewcommand{\d}{{\rm d}}
\renewcommand{\sp}{{\rm sp\,}}
\newcommand{\grintl}{[\kern-.18em [}
\newcommand{\grintr}{]\kern-.18em ]}
\newcounter{resultcounter}[section]
\newtheorem{theorem}[resultcounter]{Theorem}
\newtheorem{lemma}[resultcounter]{Lemma}
\newtheorem{proposition}[resultcounter]{Proposition}
\newtheorem{corollary}[resultcounter]{Corollary}
\newtheorem{definition}[resultcounter]{Definition}
\newtheorem{remark}[resultcounter]{Remark}
\newcommand\bep{\begin{proposition}}
\newcommand\eep{\end{proposition}}
\newcommand\ber{\begin{remark}}
\newcommand\eer{\end{remark}}
\newcommand\bel{\begin{lemma}}
\newcommand\eel{\end{lemma}}
\newcommand\bet{\begin{theorem}}
\newcommand\eet{\end{theorem}}
\newcommand\bex{\begin{example}}
\newcommand\eex{\end{example}}
\newcommand\bed{\begin{definition}}
\newcommand\eed{\end{definition}}
\newcommand\bea{\begin{assumption}}
\newcommand\eea{\end{assumption}}
\newcommand\bec{\begin{corollary}}
\newcommand\eec{\end{corollary}}
\newcommand{\beq}{\begin{equation}}
\newcommand{\eeq}{\end{equation}}
\newcommand{\beqa}{\begin{eqnarray}}
\newcommand{\eeqa}{\end{eqnarray}}
\def\one{{\mathchoice {\rm 1\mskip-4mu l} {\rm 1\mskip-4mu l} {\rm
      1\mskip-4.5mu l} {\rm 1\mskip-5mu l}}} 
  \def\cC{{\cal C}}
  \def\cF{{\cal F}}
 \def\cH{{\cal H}}
\newcommand{\R}{{\mathbb R}}
\newcommand{\N}{{\mathbb N}}
\newcommand{\C}{{\mathbb C}}
\newcommand{\Z}{{\mathbb Z}}
\def\proof{\noindent{\bf Proof.}\ \ }
\def\qed{\hfill $\Box$\medskip}
\newenvironment{innerlist}[1][\enskip\textbullet]%
        {\begin{compactitem}[#1]}{\end{compactitem}}
\title{Weber-Schafheitlin integrals with arbitrary exponent}
\author{Micha\l\ Wrochna}
\begin{document}

\maketitle

\begin{abstract}
We present explicit formulae for Weber-Schafheitlin type integrals and give them an interpretation as the kernel of a physically relevant operator related to the hamiltonian of Aharanov and Bohm. In particular, we derive explicit formulae for Weber-Schafheitlin type integrals with exponent larger or equal 1, which are distributions on $\R_+$. We discuss several special cases.
\end{abstract}

\footnotetext{\small {\bf \ Acknowledgements} The author would like to thank especially J.~Derezi\'{n}ski for suggesting the topic of this paper and proofreading, and V.~Georgescu and S.~Richard for useful advice. The author is also grateful to the Mathematics Department of Cergy-Pontoise University for hospitality and to the Government of France for financial support. A large part of this paper was written at the Department of Mathematical Physics, University of Warsaw.}


\section{Introduction}\label{s:intro}

Our aim is to calculate the integral 
\begin{equation} \label{eq:besseli}
\int_0^\infty \kappa^{\rho}J_{\mu}(x\kappa)J_{\nu}(\kappa)\d\kappa ,
\end{equation}
for suitable $\mu,\nu \in \C$ and $x\in \R_+ :=(0,\infty)$, where $J_{\mu}$ is the Bessel function of the first kind of order $\mu$. In the literature, it is known as the Weber-Schafheitlin discontinuous integral with exponent $\rho$ (or $-\rho$, depending on the convention).

In the case when $\Re\rho<1$ (and under some additional assumptions on $\mu,\nu$) it is convergent to a function, in general not continuous in $x=1$. It has been derived in several ways and analysed in many special cases, for which we refer in particular to \cite{W} and \cite{DF}. It has been applied in numerous problems, let us mention here only two recent works --- \cite{HN}, \cite{SS}. 

The case $\Re\rho\geq1$ is more delicate and requires a distributional approach. Nevertheless, it is quite natural to consider it; indeed, it appears in some problems where it plays an important role (\cite{KR2}, \cite{KeR}). In addition to that, we show in this paper that it is the kernel of an operator physically relevant for the Aharanov-Bohm system. There have already been successful attempts to derive useful expressions for the distributional case of (\ref{eq:besseli}) for special values of parameters, by Kellendonk and Richard \cite{KR} (for $\rho=1$), by Miroshin \cite{M} ($\rho\to1$ asymptotic) and by Salamon and Walter \cite{SW} (recurrence formulae and special values of parameters with $\rho\in\Z$). Motivated by the wish of exhausting all unsolved cases, we provide explicit formulae for (\ref{eq:besseli}) for arbitrary $\rho$ with positive real part. We discuss also some special cases and compare them with the results mentioned earlier.

The paper is constructed as follows. Section \ref{motywacja} serves as an additional motivation, linking the hamiltonian of Aharanov and Bohm to the integral we discuss further. In Section \ref{watsonowy}, we derive formulae for integrals involving the modified Bessel function of the second type $K_{\mu}$, as in the approach of Dixon and Ferrar \cite{DF}. We then use them to compute the integrals involving the Hankel functions of the first and second kind --- $H^+_{\mu}$, $H^-_{\mu}$,
\begin{equation} \label{eq:hankeli}
\int_0^\infty \kappa^{\rho}H^{\pm}_{\mu}(x\kappa)J_{\nu}(\kappa)\d\kappa ,
\end{equation}
closely related to (\ref{eq:besseli}). Our treatment of the integrals (\ref{eq:hankeli}) is a generalization of the approach adopted in \cite{KR}. We consider separately the cases $\Re\rho\leq0$ and $\Re\rho>0$. The second one is examinated in Section 4 and includes in particular the distributional cases. The main result for the integral (\ref{eq:besseli}) is contained in Proposition \ref{wynik2}, and follows as a simple consequence of the computations of (\ref{eq:hankeli}).

\subsection{The main result}

In the following proposition, we give formulae for the Weber-Schaftheitlin integral (\ref{eq:besseli}) with exponent $\Re\rho>0$. The result is a distribution on $\R_+$ (in general not regular), and its form depends on whether $\rho$ is an integer number (which is not the case if $\Re\rho\leq0$). We use the notation for the rescaled Gauss hypergeometric function 
\[ _{2}F_1^{\rm I}\left(a,b;c;z\right):=\frac{1}{\Gamma(c+1)} \ _{2}F_1\left(a,b;c;z\right).\]

\begin{proposition}\label{wynik2}
For any $\mu,\nu\in\C$ and $\Re\rho>0$ satisfying $\Re(\rho+\nu+1)>\left|\mu\right|$, and $x\in\R_+$, the integral $\int_0^\infty \kappa^{\rho}J_{\mu}(x\kappa)J_{\nu}(\kappa)\d\kappa$ (\ref{eq:besseli}) equals:

\begin{innerlist}
\item for $\rho\notin\Z$,
\begin{eqnarray*}
\frac{2^{\rho}}{\sin\pi\rho}\bigg[\left\{(x-1)_-^{-\rho}\sin\left({\textstyle \pi \frac{1-\rho-\mu+\nu}{2}}\right)+(x-1)_+^{-\rho}\sin\left({\textstyle \pi \frac{1+\rho-\mu+\nu}{2}}\right)\right\} \frac{x^{-1+\rho-\nu}}{(1+x)^{\rho}} \ _{2}F_1^{\rm I}\left({\textstyle \frac{1-\rho+\mu+\nu}{2}},{\textstyle\frac{1-\rho-\mu+\nu}{2}};-\rho+1;1-x^{-2}\right) \\  - \sin\left({\textstyle \pi \frac{1+\rho-\mu+\nu}{2}}\right) \frac{\Gamma({\textstyle \frac{1+\rho+\mu+\nu}{2}})\Gamma({\textstyle \frac{1+\rho-\mu+\nu}{2}})}{\Gamma({\textstyle \frac{1-\rho+\mu+\nu}{2}})\Gamma({\textstyle \frac{1-\rho-\mu+\nu}{2}})}x^{-1-\rho-\nu} \ _{2}F_1^{\rm I}\left({\textstyle \frac{1+\rho+\mu+\nu}{2}},{\textstyle\frac{1+\rho-\mu+\nu}{2}};\rho+1;1-x^{-2}\right) \bigg]
\end{eqnarray*} 

\item for  $\rho\in\Z$ and $\frac{1-\rho\pm\mu+\nu}{2}\notin\Z$,
\begin{eqnarray*}
\frac{2^{\rho}}{\pi} \bigg[ \left\{ \sin\left({\textstyle \pi \frac{1+\rho-\mu+\nu}{2}}\right)\left[(x-1)^{-\rho}_- + (-1)^{\rho}(x-1)^{-\rho}_+\right]+\cos\left({\textstyle\pi \frac{1+\rho+\mu-\nu}{2} }\right) (-1)^{\rho} \ \pi\frac{\delta^{(\rho-1)}(1-x)}{(\rho-1)!} \right\} \\ \times \ x^{-1+\rho-\nu}(1+x)^{-\rho} S_{\mu,\nu,\rho}(1-x^{-2}) +(-1)^{\rho} \frac{\Gamma({\textstyle \frac{1+\rho+\mu+\nu}{2}})\Gamma({\textstyle \frac{1+\rho-\mu+\nu}{2}})}{\Gamma({\textstyle \frac{1-\rho+\mu+\nu}{2}})\Gamma({\textstyle \frac{1-\rho-\mu+\nu}{2}})}x^{-1-\rho-\nu} \bigg\{ \sin\left({\textstyle \pi \frac{1+\rho-\mu+\nu}{2}}\right) T_{\mu,\nu,\rho}(1-x^{-2}) \\ - \ \left[\sin\left({\textstyle \pi \frac{1+\rho-\mu+\nu}{2}}\right)\log x^{-2}(x+1)\left|x-1\right|+\cos\left({\textstyle \pi \frac{1+\rho-\mu+\nu}{2}}\right)\pi\theta(x-1)\right] \ _{2}F_1^{\rm I}\left({\textstyle \frac{1+\rho+\mu+\nu}{2}},{\textstyle\frac{1+\rho-\mu+\nu}{2}};\rho+1;1-x^{-2}\right)\bigg\} \bigg].
\end{eqnarray*}
\end{innerlist}
where all the equalities hold in the sense of distributions on $\R_+$. In both cases, the RHS is the sum of a distribution multiplied by a smooth function (the first term) and of a locally integrable function (the second term). We have denoted $\theta(x)$ the Heaviside theta function, and the functions $S_{\mu,\nu,\rho}, T_{\mu,\nu,\rho}$ are defined for $\left|z\right|<1$ by
\[
S_{\mu,\nu,\rho}(z):=\sum_{k=0}^{\rho-1}\frac{\left({\textstyle \frac{1-\rho+\mu+\nu}{2}}\right)_k \left({\textstyle \frac{1-\rho-\mu+\nu}{2}}\right)_k}{(1-\rho)_k k!}z^k,
\]
\begin{eqnarray*}
T_{\mu,\nu,\rho}(z):=\sum_{k=0}^{\infty}\frac{\left({\textstyle \frac{1+\rho+\mu+\nu}{2}}\right)_k \left({\textstyle \frac{1+\rho-\mu+\nu}{2}}\right)_k}{(\rho+k)!k!}z^k \\ \times \ \left\{\psi(k+1)+\psi(\rho+k+1)-\psi({\textstyle \frac{1+\rho+\mu+\nu}{2}}+k)-\psi({\textstyle \frac{1+\rho-\mu+\nu}{2}}+k)\right\},
\end{eqnarray*}
with $\psi(y):=\frac{\d}{\d y}\Gamma(y)/\Gamma(y)$.
We use also the definition, for $\Re\lambda>-1$:
\[
(x-1)^{\lambda}_{-}:=\begin{cases}|x-1|^{\lambda},& x<1\\ 0, & x\geq1 \end{cases},
\]
\[
(x-1)^{\lambda}_{+}:=\begin{cases}0,& x<1\\ (x-1)^{\lambda}, & x\geq1 \end{cases},
\]
and extend it analytically in the sense of distributions to all values of $\lambda\in\C$ (see Remark \ref{remark1}).

\end{proposition}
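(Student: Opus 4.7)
The plan is to deduce Proposition \ref{wynik2} from the Hankel function integrals (\ref{eq:hankeli}) computed in Section 4, via the elementary decomposition
\[
J_\mu(x\kappa)=\tfrac12\bigl(H^+_\mu(x\kappa)+H^-_\mu(x\kappa)\bigr).
\]
Integrating termwise against $\kappa^\rho J_\nu(\kappa)\,\d\kappa$ (which is legitimate in the distributional sense once the Hankel versions have been established), the Weber--Schafheitlin integral becomes one half the sum of the two Hankel integrals. So the entire proposition reduces to a symbolic manipulation on the already-proved Hankel formulae.

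For $\rho\notin\Z$ I would take the expressions from Section 4 for the two Hankel integrals, which contain factors $\e^{\pm\i\pi(\cdots)}$ arising from the branch choices in relating $H^\pm_\mu$ to $K_\mu$. Adding the two and using $\frac{\e^{\i\alpha}\pm\e^{-\i\alpha}}{2}\in\{\cos\alpha,\i\sin\alpha\}$ should collect the contributions into the two trigonometric coefficients $\sin\bigl(\pi\frac{1\mp\rho-\mu+\nu}{2}\bigr)$ multiplying $(x-1)^{-\rho}_\mp$ and into the single $\sin\bigl(\pi\frac{1+\rho-\mu+\nu}{2}\bigr)$ in front of the second $_2F_1^{\rm I}$. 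The overall prefactor $\frac{2^\rho}{\sin\pi\rho}$ is inherited directly from the Hankel formulae, and the hypothesis $\Re(\rho+\nu+1)>|\mu|$ ensures convergence of the two Gamma quotients and of the integral near $\kappa=0$.

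For $\rho\in\Z$ the key point is that $\sin\pi\rho$ vanishes, so one must pass to the limit $\rho\to n$ in the non-integer formula; the non-integer formula is already proved, so this is a purely analytic step in the sense of distributions on $\R_+$. The numerator in square brackets also vanishes at $\rho=n$ (otherwise the integral itself would blow up), so by l'H\^opital the limit equals $\frac{(-1)^\rho}{\pi}$ times the $\partial_\rho$-derivative of the numerator at $\rho=n$, which accounts for the overall $\frac{2^\rho}{\pi}$ and the $(-1)^\rho$ prefactors in the second formula. Differentiating in $\rho$ produces: (i) the $\log x^{-2}(x+1)|x-1|$ factor from $\partial_\rho$ acting on the explicit powers of $x$, $x+1$ and $(x-1)_\pm^{-\rho}$; (ii) the Heaviside contribution $\pi\theta(x-1)$ from the jump that appears when $(x-1)^{-\rho}_-$ and $(-1)^\rho(x-1)^{-\rho}_+$ combine near $x=1$; (iii) the $\psi$-sums defining $T_{\mu,\nu,\rho}$ from $\partial_\rho$ acting on the Pochhammer symbols and factorials in the series expansion of the second $_2F_1^{\rm I}$. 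The finite polynomial $S_{\mu,\nu,\rho}$ is what remains of the \emph{first} $_2F_1^{\rm I}\bigl(\cdots;-\rho+1;1-x^{-2}\bigr)$: when $-\rho+1=1-n$ is a non-positive integer the series truncates after $\rho$ terms, which is precisely the definition of $S_{\mu,\nu,\rho}$.

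The main obstacle is the distributional nature of the $\rho\to n$ limit near $x=1$. The quantity $(x-1)_\pm^{-\rho}$ is not locally integrable at integer $\rho$ and must be defined by analytic continuation as recalled in Remark \ref{remark1}; its expansion in $\rho$ around an integer produces the delta-type contribution $(-1)^\rho\pi\frac{\delta^{(\rho-1)}(1-x)}{(\rho-1)!}$ that appears in the formula. One therefore has to expand, simultaneously and consistently, both $(x-1)^{-\rho}_\pm$ and the first hypergeometric function (whose parameter $-\rho+1$ is also integer) to first order in $\rho-n$, and check that the two expansions combine into the stated distribution without spurious singular terms. Once this bookkeeping is carried out, comparing Laurent coefficients in $\rho-n$ yields the integer formula exactly as stated, and the decomposition into ``distribution times smooth function plus locally integrable function'' is then transparent from the final expression.
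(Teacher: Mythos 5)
Your reduction of the proposition to the Hankel-function integrals via $J_\mu=\tfrac12(H^+_\mu+H^-_\mu)$, followed by recombining the phase factors $\e^{\pm\i\pi\frac{1+\rho-\mu+\nu}{2}}$ into sines and cosines, is exactly the paper's route: this is equation (\ref{eq:wsprzezf}) combined with Proposition \ref{eq:wynik1} and the representations of $(x-1\pm\i0)^{-\rho}$ from Remark \ref{remark1}. For $\rho\notin\Z$ your argument therefore coincides with the paper's, and is fine provided you make explicit that the relevant input from Section \ref{sekcjahankel} is the decomposition $q(x\pm\i0)=h_1(x)+(x-1\pm\i0)^{\rho}h_2^{\pm}(x)$ obtained from the connection formula (\ref{eq:decomposition}), which is what turns the argument $x^{-2}$ into $1-x^{-2}$ and isolates the singular factor.

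For $\rho\in\Z$ you genuinely diverge from the paper: you propose to obtain the integer formula as the limit $\rho\to n$ of the non-integer one, whereas the paper never takes such a limit --- it invokes the integer-parameter connection formula (eq.\ (B.10) of [BS]), which already contains the finite sum, the $\psi$-series and the $\log(1-z)$ term, and gets the $\delta^{(\rho-1)}$ contribution from the explicit formula for $(x-1\pm\i0)^{-k}$ in Remark \ref{remark1}. Your route can be made to work, but as written it has two concrete gaps. First, to apply l'H\^opital you need to know that the distribution-valued map $\rho\mapsto\int_0^\infty\kappa^\rho J_\mu(x\kappa)J_\nu(\kappa)\,\d\kappa$ is continuous (indeed analytic) in $\rho$ and that the bracketed numerator vanishes at $\rho=n$; the remark ``otherwise the integral itself would blow up'' presupposes the continuity you have not established. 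Second, and more seriously, the claim that the first hypergeometric series ``truncates after $\rho$ terms'' is not correct as stated: with $c=1-\rho$ the coefficient $\frac{1}{\sin\pi\rho}\cdot\frac{1}{\Gamma(2-\rho)}\cdot\frac{1}{(1-\rho)_k}$ is finite at $\rho=n$ only for $k\le n-1$ (yielding $S_{\mu,\nu,\rho}$), while the tail $k\ge n$ retains a simple pole in $\rho$ that must be shown to cancel against the matching pole of the second hypergeometric term; it is precisely the finite part of this cancellation that produces $T_{\mu,\nu,\rho}$, the $\psi$-sums and the factor $\log x^{-2}(x+1)|x-1|+$ (branch term). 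Simultaneously, the distributions $(x-1)_\pm^{-\rho}$ have their own simple poles at $\rho=n$ with $\delta^{(n-1)}$ residues, which must be expanded coherently with the rest. None of this bookkeeping is carried out, and it is the entire content of the integer case; the paper's choice to quote (B.10) is exactly what makes that work unnecessary.
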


\section{The Weber-Schafheitlin integral as an integral kernel}\label{motywacja}

In the following section, we quote some results obtained in \cite{BDG} for the Aharonov-Bohm hamiltonian and conclude from them, that the Weber-Schafheitlin integral describes the integral kernel of a physically relevant operator. We motivate thus the need for explicit formulae, valid in particular for the distributional $\Re\rho\geq1$ case.

We consider the Hilbert space $L^2(\R^2)$ and denote its inner product $(\cdot|\cdot)$. Since it will be convenient to use polar coordinates $r, \phi$ on $\R^2$, we introduce the unitary transformation \[L^2(\R^2)\ni f\mapsto Uf\in L^2(0,\infty)\otimes L^2(-\pi,\pi)\] given by $Uf(r,\phi)=\sqrt{r}f(r\cos\phi,r\sin\phi)$, which allows us to identify $L^2(\R^2)$ with
$L^2(0,\infty)\otimes L^2(-\pi,\pi)$.

The Aharanov-Bohm hamiltonian in polar coordinates is
\[
H^{{\rm AB}}_\lambda:=-\partial_r^2-\frac{1}{r^2}(\partial_\phi+\i\lambda)^2,
\]
understood as the self-adjoint operator associated to the differential expression above (defined on an appropriate domain). We allow for the moment the parameter $\lambda$ to be any complex number. 

Since the self-adjoint operator $L:=-\i\partial_{\phi}$ has spectrum
$\sp(L)=\Z$, we have the decomposition $L^2(\R^2)
=\mathop{\oplus}\limits_{k\in\Z}\cH_k$ where $\cH_k$ is the spectral
subspace of $L$ for the eigenvalue $k$. With the help of $U$ we can
identify $\cH_k$ with $L^2(\R)$. Since $L$ commutes with $H^{{\rm AB}}_\lambda$, we obtain the decomposition
\[
U H^{{\rm AB}}_\lambda U^*=\mathop{\oplus}_{k\in\Z}H_{k+\lambda},
\]
where $H_{\mu}$ acts as the differential operator $-\partial^2_x+\frac{\mu^2-\frac{1}{4}}{x^2}$, when restricted to $\cC_c(\R_+)$.


We now assume $\mu>-1$. We gather some results from \cite{BDG} about the operator $H_{\mu}$. We will need first to define the following symmetric operator, corresponding up to a constant factor to the so-called Hankel transformation:
\begin{definition}
$\cF_{\mu}$ is the operator on $L^2(0,\infty)$ given by
\[
\left(\cF_{\mu}f\right)(k):=\int_0^\infty J_{\mu}(kx)\sqrt{kx}f(x)\d x
\]
\end{definition}
We have then:
\begin{theorem}
Let $0<a<b<\infty$ and denote $\one_{[a,b]}$ the characteristic function of the corresponding interval. The integral kernel of $\one_{[a,b]}(H_{\mu})$ is
\[
\one_{[a,b]}(H_{\mu})(x,y) = \int_{\sqrt a}^{\sqrt b} \sqrt{xy}J_{\mu}(x \kappa) J_{\mu}( y\kappa)\kappa\d \kappa,
\]
considered as a quadratic form on $\cC_c^{\infty}(\R_+)$, that is, explicitly:
\[
\left( f | \one_{[a,b]}(H_{\mu})f \right)=\int_0^{\infty}\one_{[a,b]}(\kappa^2)\left|(\cF_{\mu}f)(\kappa)\right|^2\d\kappa
\]
for any $f\in\cC_c^{\infty}(\R_+)$. We may thus identify
\[
\one_{[a,b]}(H_{\mu})=\cF_{\mu}\one_{[a,b]}(Q^2)\cF_{\mu}^*
\]
where $Q$ is the self-adjoint position operator, and in consequence,
\[ 
\cF_{\mu}H_{\mu}\cF_{\mu}^{-1}=Q^2.
\]
\end{theorem}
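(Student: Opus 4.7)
The plan is to reduce the claim to the spectral theorem by showing that $\cF_\mu$ is a unitary operator on $L^2(\R_+)$ that intertwines $H_\mu$ with multiplication by $\kappa^2$. Once the identity $\cF_\mu H_\mu\cF_\mu^{-1}=Q^2$ is established, functional calculus gives at once $\one_{[a,b]}(H_\mu)=\cF_\mu\,\one_{[a,b]}(Q^2)\,\cF_\mu^*$, and the kernel formula follows by writing $\cF_\mu$ out explicitly and exchanging the order of integration.

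Unitarity of $\cF_\mu$ for $\mu>-1$ is equivalent to the Hankel inversion theorem, which amounts to the distributional identity
\[
\int_0^\infty \sqrt{kx}\,J_\mu(kx)\sqrt{k'x}\,J_\mu(k'x)\,\d x=\delta(k-k'),
\]
a classical result found in Watson's treatise. The operator $\cF_\mu$ is symmetric on $\cC_c^\infty(\R_+)$ by Fubini, and combined with the completeness relation above this yields $\cF_\mu\cF_\mu^*=\cF_\mu^*\cF_\mu=\un$. For the intertwining I would argue on test functions $f\in\cC_c^\infty(\R_+)$: the Bessel equation is exactly
\[
\Big(-\partial_x^2+\frac{\mu^2-\tfrac14}{x^2}\Big)\sqrt{kx}\,J_\mu(kx)=k^2\sqrt{kx}\,J_\mu(kx),
\]
so two integrations by parts in $(\cF_\mu H_\mu f)(k)=\int_0^\infty \sqrt{kx}\,J_\mu(kx)(H_\mu f)(x)\,\d x$ transfer the differential operator onto the Bessel kernel, with boundary terms vanishing since $f$ is compactly supported in $(0,\infty)$. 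This produces $\cF_\mu H_\mu f=Q^2\cF_\mu f$ on a dense set, and self-adjointness of $H_\mu$ (with the extension fixed in \cite{BDG}) promotes this to the operator identity.

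The kernel formula is then a direct computation. For $f\in\cC_c^\infty(\R_+)$,
\[
(f|\one_{[a,b]}(H_\mu)f)=\int_0^\infty \one_{[a,b]}(\kappa^2)\,|(\cF_\mu f)(\kappa)|^2\,\d\kappa=\int_{\sqrt a}^{\sqrt b}|(\cF_\mu f)(\kappa)|^2\,\d\kappa,
\]
and expanding each factor of $\cF_\mu f$ as an $x$- resp. $y$-integral and swapping the integration orders (legitimate because $\kappa$ ranges over the bounded interval $[\sqrt a,\sqrt b]$, $J_\mu$ is continuous away from $0$, and $f$ has compact support in $\R_+$) yields the proposed kernel $\int_{\sqrt a}^{\sqrt b}\sqrt{xy}\,J_\mu(x\kappa)J_\mu(y\kappa)\kappa\,\d\kappa$. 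The main subtlety I expect concerns the choice of self-adjoint extension of $H_\mu$ when $\mu\in(-1,1)$, where $H_\mu$ restricted to $\cC_c^\infty(\R_+)$ has deficiency indices $(1,1)$: one must know that the extension implicit in the statement is exactly the one diagonalized by $\cF_\mu$. This is precisely what the results of \cite{BDG} invoked here provide, so for the present statement it may be taken as given.
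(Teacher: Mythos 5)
The paper itself offers no proof of this theorem: it is quoted verbatim from \cite{BDG} as background material, so there is no ``paper's approach'' to compare against. Judged on its own, your sketch is the standard and correct route, though you run the logic in the opposite direction to the statement: you first establish the diagonalization $\cF_\mu H_\mu\cF_\mu^{-1}=Q^2$ (via unitarity of the Hankel transform and the Bessel differential equation for $\sqrt{kx}\,J_\mu(kx)$, both of which check out) and then read off the kernel by functional calculus and Fubini, whereas the statement presents the kernel formula first and the diagonalization ``in consequence.'' Either direction is legitimate. The one step you state too quickly is the claim that symmetry of the intertwining relation on $\cC_c^\infty(\R_+)$ plus self-adjointness of $H_\mu$ ``promotes this to the operator identity'': for $\mu\in(-1,1)$ the restriction of $H_\mu$ to $\cC_c^\infty(\R_+)$ is not essentially self-adjoint, so $\cC_c^\infty(\R_+)$ is not a core and the intertwining on test functions only shows that $\cF_\mu^{-1}Q^2\cF_\mu$ is \emph{some} self-adjoint extension of the minimal operator; identifying it with the particular extension called $H_\mu$ requires the characterization given in \cite{BDG} (e.g.\ via the boundary behaviour $x^{1/2+\mu}$ of the generalized eigenfunctions). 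You do flag exactly this subtlety in your last sentence and defer it to \cite{BDG}, which is the same thing the paper does, so the proposal is acceptable as a reconstruction of the omitted proof.
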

Note that in particular, $\cF_{\mu}$ is a unitary involution. For any $\gamma\in\C$, one gets
\[
\cF_{\mu}H_{\mu}^{\gamma}\cF_{\mu}^{-1}=Q^{2\gamma},
\]
and in the sense above, the integral kernel of $H_{\mu}^{\gamma}$ is
\[
H_{\mu}^{\gamma}(x,y)=\int_{0}^{\infty} \kappa^{2\gamma}\sqrt{xy}J_{\mu}(x \kappa) J_{\mu}( y\kappa)\kappa\d \kappa,
\]
which can be expressed in terms of the Weber-Schafheitlin integral with $\mu=\nu$.

We quote also the following result concerning the wave operators for $H_{\mu}$, assuming now $\mu\in\R$:
\begin{theorem}
For $\mu,\nu>1$, the M\o ller wave operators $\Omega^{\pm}_{\mu,\nu}$ associated to $H_{\mu}$, $H_{\nu}$ exist and
\[
\Omega^{\pm}_{\mu,\nu}:= \lim_{t\to\pm{\infty}}\e^{itH_{\mu}}\e^{-itH_{\nu}}=\e^{\pm\i(\mu-\nu)\pi/2}\cF_{\mu}\cF_{\nu}.
\]
\end{theorem}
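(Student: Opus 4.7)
The plan is to use the diagonalisation $\cF_\mu H_\mu\cF_\mu^{-1}=Q^2$ (combined with $\cF_\mu^{-1}=\cF_\mu$) to rewrite $\e^{\i tH_\mu}=\cF_\mu\e^{\i tQ^2}\cF_\mu$ and hence
\[
\e^{\i tH_\mu}\e^{-\i tH_\nu}=\cF_\mu\bigl(\e^{\i tQ^2}\cF_\mu\cF_\nu\e^{-\i tQ^2}\bigr)\cF_\nu,
\]
thereby reducing both the existence and the explicit form of $\Omega^\pm_{\mu,\nu}$ to the strong limit
\[
\e^{\i tQ^2}\cF_\mu\cF_\nu\e^{-\i tQ^2}\xrightarrow[t\to\pm\infty]{{\rm s}}\e^{\pm\i(\mu-\nu)\pi/2}\,I.
\]
The phase $\e^{\pm\i(\mu-\nu)\pi/2}$ will then be extracted from the behaviour of the Weber--Schafheitlin kernel of $\cF_\mu\cF_\nu$ under the oscillation $\e^{\i t(x^2-y^2)}$.

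Rescaling $\kappa\mapsto\kappa/y$ identifies the kernel of $A:=\cF_\mu\cF_\nu$ with a Weber--Schafheitlin integral of exponent $\rho=1$:
\[
A(x,y)=\frac{\sqrt{xy}}{y^2}\int_0^\infty\kappa\,J_\mu\!\bigl((x/y)\kappa\bigr)J_\nu(\kappa)\,\d\kappa.
\]
Using the integer branch of Proposition~\ref{wynik2} together with the Jacobian identity $\delta(1-x/y)=y\,\delta(x-y)$ and the analytic-continuation formula $(s-1)_-^{-1}-(s-1)_+^{-1}=-\mathrm{Pv}(1/(s-1))$, one obtains a decomposition near the diagonal $\{x=y\}$ of the form
\[
A(x,y)=\cos\!\tfrac{\pi(\mu-\nu)}{2}\,\delta(x-y)+\tfrac{\sin(\pi(\mu-\nu)/2)}{\pi}\,\mathrm{Pv}\,\tfrac{1}{y-x}+R(x,y),
\]
where $R$ is the locally integrable contribution of the hypergeometric piece of Proposition~\ref{wynik2} (evaluated via $S_{\mu,\nu,1}\equiv 1$ and the smooth factors $x^{-\nu}(1+x)^{-1}$ on the diagonal). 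Evaluating on $f\in\ccf(\R_+)$, and setting $u=y-x$ so that $x^2-y^2=-2xu-u^2$, the oscillation $\e^{\i t(x^2-y^2)}$ passes through $\delta(x-y)$ unchanged and contributes $\cos(\pi(\mu-\nu)/2)f(x)$; the principal-value part is handled by the Plemelj--Sokhotski identity
\[
\int_\R\mathrm{Pv}(1/u)\,\e^{\i\lambda u}\,\d u=\i\pi\operatorname{sgn}(\lambda),
\]
applied with $\lambda$ of sign $\pm\operatorname{sgn}(t)$ and $x>0$, and yields the contribution $\pm\i\sin(\pi(\mu-\nu)/2)f(x)$ as $t\to\pm\infty$; the $R$-contribution vanishes in the weak limit by the Riemann--Lebesgue lemma. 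Summing the three terms reconstructs the phase $\e^{\pm\i(\mu-\nu)\pi/2}f(x)$.

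Since $\e^{\i tQ^2}A\e^{-\i tQ^2}$ is unitary for every $t$ and the candidate limit $\e^{\pm\i(\mu-\nu)\pi/2}I$ is an isometry, the polarization identity
\[
\bigl\|(B(t)-B)f\bigr\|^2=2\|f\|^2-2\,\Re\bigl(B(t)f,Bf\bigr)
\]
promotes the weak convergence on the dense subspace $\ccf(\R_+)$ to strong convergence on $L^2(\R_+)$; multiplying by $\cF_\mu$ on the left and $\cF_\nu$ on the right yields the claimed formula for $\Omega^\pm_{\mu,\nu}$.

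The principal technical difficulty is the careful bookkeeping of Jacobians and signs when transferring the Weber--Schafheitlin distribution stated in the variable $s=x/y$ to the $(x,y)$-kernel near the diagonal, and obtaining the Plemelj--Sokhotski limit uniformly in $x$ on compact subsets. Uniformity is what allows the weak limit to be upgraded to a strong one; it is obtained by a standard non-stationary phase argument on the region $|y-x|\geq\varepsilon$, where $x^2-y^2$ has non-vanishing derivative and one can integrate by parts to gain arbitrary decay in $t$.
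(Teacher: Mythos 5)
You should first be aware that the paper contains no proof of this theorem: it is quoted verbatim from \cite{BDG} as motivation for studying the exponent-$1$ Weber--Schafheitlin integral, so there is nothing in the text to compare your argument against. Your overall strategy is legitimate and rather elegant: writing $\e^{\i tH_\mu}\e^{-\i tH_\nu}=\cF_\mu\bigl(\e^{\i tQ^2}\cF_\mu\cF_\nu\e^{-\i tQ^2}\bigr)\cF_\nu$, identifying the kernel of $\cF_\mu\cF_\nu$ with a $\rho=1$ Weber--Schafheitlin distribution, and upgrading weak to strong convergence via unitarity are all sound steps, and the method does in principle yield the theorem.

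The genuine gap is exactly where you locate ``the principal technical difficulty'': the sign bookkeeping does not close, and as written your computation produces $\e^{\mp\i(\mu-\nu)\pi/2}$, the complex conjugate of the assertion. Concretely: with $u=y-x$ the phase is $\e^{\i t(x^2-y^2)}=\e^{-\i tu(2x+u)}$, so the frequency conjugate to $u$ is $\lambda=-2tx$, whose sign is $-{\rm sgn}(t)$ for $x>0$ --- not ``$\pm{\rm sgn}(t)$'' as you apply it. Moreover your principal-value coefficient has the wrong orientation: a direct check with $\mu=\tfrac12$, $\nu=-\tfrac12$ gives $\cF_{1/2}\cF_{-1/2}(x,y)=\tfrac{2}{\pi}\int_0^\infty\sin(kx)\cos(ky)\,\d k=\tfrac1\pi\Pv\tfrac{1}{x-y}+\tfrac{1}{\pi(x+y)}$, so the singular part of $\cF_\mu\cF_\nu$ is $\cos\tfrac{\pi(\mu-\nu)}{2}\,\delta(x-y)+\tfrac{1}{\pi}\sin\tfrac{\pi(\mu-\nu)}{2}\,\Pv\tfrac{1}{x-y}$, opposite in sign to the $\Pv\tfrac{1}{y-x}$ you wrote down. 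With \emph{both} signs corrected the two errors no longer cancel by fiat but genuinely combine to $\cos\tfrac{\pi(\mu-\nu)}{2}\pm\i\sin\tfrac{\pi(\mu-\nu)}{2}$ as $t\to\pm\infty$; the cleanest way to fix the kernel unambiguously is to derive its near-diagonal part from the asymptotics $\sqrt{\kappa x}\,J_\mu(\kappa x)\sim\sqrt{2/\pi}\cos(\kappa x-\mu\pi/2-\pi/4)$ rather than from Proposition \ref{wynik2}. Two further points need attention before the argument is complete: the hypothesis of Proposition \ref{wynik2} is $\Re(\rho+\nu+1)>|\mu|$, i.e.\ $\nu+2>\mu$ when $\rho=1$, which is \emph{not} implied by $\mu,\nu>1$ (take $\nu=1.1$, $\mu=10$), so you must either restrict parameters or argue via $\cF_\mu\cF_\nu=(\cF_\nu\cF_\mu)^*$; and the disposal of the locally integrable remainder $R$ by Riemann--Lebesgue together with the claimed uniform non-stationary-phase bound on $|y-x|\ge\varepsilon$ must be written out, since integrating by parts there produces boundary terms on the set where $R$ has its logarithmic singularity, and these must be controlled uniformly in $t$ before the limits $\varepsilon\to0$ and $t\to\pm\infty$ can be interchanged.
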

In particular, the integral kernel of the operator $\Omega^{\pm}_{\mu,\nu}H_{\nu}^{\gamma}=H_{\mu}^{\gamma}\Omega^{\pm}_{\mu,\nu}$ may be useful in calculations. We have by the above considerations:
\[
\Omega^{\pm}_{\mu,\nu}H^{\gamma}_{\nu}=\e^{\pm\i(\mu-\nu)\pi/2}\cF_{\mu}Q^{2\gamma}\cF_{\nu}
\]
and its integral kernel is equal to
\[
\Omega^{\pm}_{\mu,\nu}H_{\nu}^{\gamma}(x,y)=\e^{\pm\i(\mu-\nu)\pi/2}\int_{0}^{\infty} \kappa^{2\gamma}\sqrt{xy}J_{\mu}(x \kappa) J_{\nu}( y\kappa)\kappa\d \kappa=\e^{\pm\i(\mu-\nu)\pi/2}\sqrt{\frac{x}{y}}y^{-2\gamma-1}\int_{0}^{\infty} \kappa^{2\gamma+1}J_{\mu}\left(\frac{x}{y} \kappa\right) J_{\nu}\left( \kappa\right)\d \kappa,
\]
where the last integral is of Weber-Schafheitlin type with exponent $2\gamma+1$ and argument $\frac{x}{y}$.

\section{The Weber-Schafheitlin integral with $\Re\rho<1$}\label{watsonowy}

Proceeding as in \cite{DF}, we quote the following classic result \cite{W} for the integral involving the modified Bessel function of the first and second kind, $I_{\mu}$ and $K_{\mu}$:

\begin{lemma}\label{calka1} For $\Re z>0$, $\left|z\right|>1$, \ $\Re(\nu+\rho+1)>\left|\Re\mu\right|$, one has
\[
\int_0^\infty \kappa^{\rho}K_{\mu}(z\kappa)I_{\nu}(\kappa)\d\kappa=\Gamma({\textstyle \frac{1+\rho+\mu+\nu}{2}})\Gamma({\textstyle \frac{1+\rho-\mu+\nu}{2}}) 2^{\rho-1}z^{-1-\rho-\nu} \ _{2}F_1^{\rm I}\left({\textstyle\frac{1+\rho+\mu+\nu}{2}},{\textstyle\frac{1+\rho-\mu+\nu}{2}};\nu+1;z^{-2}\right).
\]
\end{lemma}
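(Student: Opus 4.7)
The strategy is to expand $I_{\nu}$ as a power series in $\kappa$, interchange summation and integration under an auxiliary absolute-convergence hypothesis, evaluate each term via the Mellin transform of $K_{\mu}$, recognize the resulting series as a Gauss hypergeometric function, and finally remove the auxiliary hypothesis by analytic continuation in $z$.

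Concretely, inserting
\[
I_\nu(\kappa)=\sum_{k=0}^{\infty}\frac{(\kappa/2)^{\nu+2k}}{k!\,\Gamma(\nu+k+1)}
\]
and temporarily strengthening the hypothesis to $\Re z>1$, the exponential decay of $K_\mu(z\kappa)$ at $\kappa\to\infty$ dominates the growth of $I_\nu(\kappa)$ (the product behaves like $\e^{(1-\Re z)\kappa}/\kappa$), while at $\kappa\to 0$ the leading asymptotics $K_\mu(z\kappa)\sim\tfrac12\Gamma(|\mu|)(z\kappa/2)^{-|\mu|}$ together with $I_\nu(\kappa)\sim(\kappa/2)^\nu/\Gamma(\nu+1)$ and the hypothesis $\Re(\rho+\nu+1)>|\Re\mu|$ guarantee integrability. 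Fubini then reduces the problem to the classical Mellin transform
\[
\int_0^\infty\kappa^{s-1}K_\mu(z\kappa)\,\d\kappa=2^{s-2}z^{-s}\,\Gamma\bigl({\textstyle\tfrac{s+\mu}{2}}\bigr)\,\Gamma\bigl({\textstyle\tfrac{s-\mu}{2}}\bigr),\qquad \Re s>|\Re\mu|,
\]
evaluated at $s=\rho+\nu+2k+1$. Using $\Gamma(a+k)=(a)_k\Gamma(a)$ to pull the $k$-dependence out of the $\Gamma$-factors and $\Gamma(\nu+1+k)=(\nu+1)_k\Gamma(\nu+1)$ to do the same in the denominator, the sum collapses into a Gauss series in $z^{-2}$, yielding the claimed right-hand side once the residual $1/\Gamma(\nu+1)$ is absorbed into the ${}_2F_1^{\rm I}$ normalization. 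Convergence of the hypergeometric series for $|z^{-2}|<1$ is built in, matching the domain $|z|>1$.

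The step I expect to be the main obstacle is removing the auxiliary assumption $\Re z>1$ to reach the stated $\{\Re z>0,\;|z|>1\}$. For $0<\Re z\leq 1$ the left-hand integrand is only conditionally integrable at infinity and must be interpreted as an oscillatory improper integral, so Fubini no longer applies directly. The right-hand side, by contrast, is manifestly holomorphic on $\{|z|>1\}$ (on the principal branch), since the hypergeometric series converges absolutely there. It thus suffices to exhibit the LHS as a holomorphic function of $z$ on the same domain; this can be done either by an integration by parts trading a power of $\kappa$ against $I_\nu'$ for improved decay, or by rotating the contour of integration in $\kappa$ into the half-plane where $K_\mu(z\kappa)$ decays exponentially, exploiting that $K_\mu$ has no zeros there and only mild singularities of $I_\nu$ at the origin. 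Once holomorphicity of both sides is established on $\{\Re z>0,\;|z|>1\}$, the identity propagates by analytic continuation from the region $\Re z>1$ where it has already been proved directly. This is essentially Watson's classical argument in \cite{W}, to which the paper refers.
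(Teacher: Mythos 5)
Your proposal follows essentially the same route as the paper's proof: expand $I_\nu$ in its power series, integrate term by term using the Mellin transform of $K_\mu$ (which the paper obtains from the integral representation of $K_\mu$), resum into the Gauss series, and extend the domain in $z$ afterwards. The only structural difference is bookkeeping: the paper first rescales $\kappa\mapsto\kappa/z$ so that the termwise integrals converge directly under $\Re z>0$, $|z|>1$, whereas you expand before rescaling and therefore need the auxiliary strip $\Re z>1$ followed by analytic continuation; both variants lead to the same series and the same ${}_2F_1$.

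One point in your continuation step is inaccurate and worth fixing. For $0<\Re z<1$ the left-hand integrand is not ``conditionally integrable at infinity'': since $K_\mu(z\kappa)I_\nu(\kappa)\sim\tfrac{1}{2\kappa\sqrt{z}}\,\e^{(1-z)\kappa}$ as $\kappa\to\infty$, it grows exponentially, so the integral over $\R_+$ does not exist even as an oscillatory improper integral, and your first remedy (integration by parts to gain decay) cannot work. Only your second remedy is viable: rotating the contour onto the ray $\arg\kappa=\arg z$, which is exactly the paper's rescaling, after which the integrand decays like $\e^{-(1-\Re(1/z))\kappa}$ with $\Re(1/z)=\Re z/|z|^2<1$ precisely because $|z|>1$. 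With that reading, the identity on $\{\Re z>0,\ |z|>1\}$ is a statement about the analytic continuation of the left-hand side off $\Re z>1$ --- which is also how the paper (implicitly) uses the lemma when passing to Corollary~\ref{corrolaryk} and to purely imaginary $z$.
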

\begin{proof}
The conditions for $\mu, \nu, \rho$, for the convergence of the integral, are established using asymptotic series for the Bessel functions of the corresponding type.

For the derivation of the integral, we first rescale the variable $\kappa$ by the factor $z^{-1}$, expand $I_{\nu}(\kappa/z)$ as a power series and then use, assuming $\left|z\right|>1$,
\[
\int_0^{\infty} K_{\mu}(\kappa)\kappa^{\beta-1}=\int_0^{\infty} \int_0^{\infty}\e^{-u \kappa}(u^2-1)^{\mu-\frac{1}{2}} \kappa^{\beta-1}\d u \d \kappa=2^{\beta-2}\Gamma\left({\textstyle \frac{\beta-\mu}{2}}\right)\Gamma\left({\textstyle \frac{\beta+\mu}{2}}\right),
\]
where we have substituted for $K_{\mu}$ the corresponding integral representation, and computed the obtained expression, integrating first with respect to $\kappa$. It remains to compare the obtained series with the hypergeometric $_{2}F_1$ function on the RHS.\qed
\end{proof}

We recall the relation between the Bessel function of the first kind $J_{\mu}$ and $I_{\nu}$:
\[
I_{\nu}(z)=\i^{-\nu}J_{\nu}(\i z).
\]

Using 
\[\int_0^\infty \kappa^{\rho}K_{\mu}(z\kappa)I_{\nu}(\kappa)\d\kappa=z^{-\rho-1}\int_0^\infty \kappa^{\rho}K_{\mu}(\kappa)I_{\nu}(\kappa/z)\d\kappa,
\] 
\[
\int_0^\infty \kappa^{\rho}K_{\mu}(z\kappa)J_{\nu}(\kappa)\d\kappa=z^{-\rho-1}\int_0^\infty \kappa^{\rho}K_{\mu}(\kappa)J_{\nu}(\kappa/z)\d\kappa,
\]
we get as a straightforward corollary of Lemma \ref{calka1}:
\begin{corollary}\label{corrolaryk}For $\Re z>0$, \ $\Re(\nu+\rho+1)>\left|\Re\mu\right|$,
\begin{equation}\label{eq:besselsecondia}
\int_0^\infty \kappa^{\rho}K_{\mu}(z\kappa)J_{\nu}(\kappa)\d\kappa=\Gamma({\textstyle \frac{1+\rho+\mu+\nu}{2}})\Gamma({\textstyle \frac{1+\rho-\mu+\nu}{2}}) 2^{\rho-1}z^{-1-\rho-\nu} \ _{2}F_1^{\rm I}\left({\textstyle\frac{1+\rho+\mu+\nu}{2}},{\textstyle\frac{1+\rho-\mu+\nu}{2}};\nu+1;-z^{-2}\right).
\end{equation}
\end{corollary}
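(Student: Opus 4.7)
The plan is to derive the formula directly from Lemma~\ref{calka1} by using the relation $I_\nu(w) = \i^{-\nu} J_\nu(\i w)$ together with the two rescaling identities (changes of variable) displayed in the excerpt. The key idea is to rewrite $J_\nu$ in terms of $I_\nu$ with a complex argument, which effectively substitutes $z\mapsto \i z$ in Lemma~\ref{calka1} and thereby flips the sign of the hypergeometric argument.

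Concretely, I would first strip $z$ out of the argument of $K_\mu$ via the second rescaling identity, then substitute $J_\nu(\kappa/z) = \i^\nu I_\nu(\kappa/(\i z))$ (a direct consequence of the stated $I$--$J$ relation), and finally reverse the first rescaling identity with $\i z$ playing the role of the scaling parameter. This gives
\[
\int_0^\infty \kappa^{\rho} K_\mu(z\kappa) J_\nu(\kappa)\,\d\kappa \;=\; \i^\nu\, z^{-\rho-1}(\i z)^{\rho+1}\int_0^\infty \kappa^{\rho} K_\mu(\i z\kappa) I_\nu(\kappa)\,\d\kappa.
\]
Applying Lemma~\ref{calka1} with $z\mapsto \i z$ evaluates the last integral: the hypergeometric argument becomes $(\i z)^{-2}=-z^{-2}$, while the accumulated $\i$-factors collapse via $\i^\nu(\i z)^{\rho+1}(\i z)^{-1-\rho-\nu} = \i^\nu(\i z)^{-\nu} = z^{-\nu}$, producing exactly the asserted prefactor $2^{\rho-1}z^{-1-\rho-\nu}\Gamma(\tfrac{1+\rho+\mu+\nu}{2})\Gamma(\tfrac{1+\rho-\mu+\nu}{2})$.

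The only subtle step is the domain of validity. A direct application of Lemma~\ref{calka1} to $\i z$ requires $\Re(\i z)>0$ and $|\i z|>1$, i.e.\ $\Im z<0$ and $|z|>1$. From this region the identity extends to all $z$ with $\Re z>0$ by analytic continuation, since both sides depend holomorphically on $z$ there: the LHS thanks to the exponential decay of $K_\mu(z\kappa)$ for $\Re z>0$ together with the power-type bounds at the origin secured by the hypothesis $\Re(\rho+\nu+1)>|\Re\mu|$, and the RHS because the hypergeometric function continues analytically in its argument. This continuation is the main point requiring care; the remaining manipulations are purely algebraic, which is why the paper labels the result a straightforward corollary.
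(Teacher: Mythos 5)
Your proposal is correct and follows essentially the same route as the paper: the paper also combines the relation $I_\nu(w)=\i^{-\nu}J_\nu(\i w)$ with the two rescaling identities to reduce the $J_\nu$-integral to Lemma \ref{calka1} evaluated at $\i z$, and then removes the extra hypotheses by analytic continuation in $z$. Your write-up merely makes explicit the cancellation of the $\i$-factors and the continuation region (noting that $\Re(\i z)>0$, not just $|z|>1$, must be relaxed), which the paper leaves implicit.
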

Note that the superfluous assumption $\left|z\right|>1$ has been eliminated by analytic continuation with respect to $z$, using the analyticity of the Gauss hypergeometric function $_{2}F_1$ on $\C\setminus\left[\right.1,\infty\left[\right.$.

We denote the Hankel function of the first and second kind, respectively --- $H^{+}$, $H^{-}$, and recall that they are related to $K_{\mu}$ as follows, for any $y\in\C$: 
\[
H^{+}_{\mu}(y)=\frac{2}{\i\pi} \ \e^{\frac{-\i\pi\mu}{2}}K_{\mu}(-\i y),
\]
\[
H^{-}_{\mu}(y)=-\frac{2}{\i\pi} \ \e^{\frac{\i\pi\mu}{2}}K_{\mu}(\i y).
\]
It follows that the integrals
\[
\int_0^\infty\kappa^{\rho}H^{\pm}_{\mu}(x\kappa)J_{\nu}(\kappa)\d\kappa 
\]
for $x\in\R_+$ are both the limiting case of $\int_0^\infty\kappa^{\rho}K_{\mu}(z\kappa)J_{\nu}(\kappa)\d\kappa$ for purely imaginary $z$. We use the results obtained in Corollary \ref{corrolaryk} for $\Re z>0$, setting first $z=\mp\i(x\pm\i\varepsilon)$, which gives:
\begin{eqnarray*}
\int_0^\infty\kappa^{\rho}H^{\pm}_{\mu}(x\kappa)J_{\nu}(\kappa)\d\kappa=\lim_{\varepsilon\searrow0}\int_0^\infty\kappa^{\rho}H^{\pm}_{\mu}(x\pm\i \varepsilon\kappa)J_{\nu}(\kappa)\d\kappa \\ = \ \frac{2}{\i\pi} \ \e^{\frac{\mp\i\pi\mu}{2}}\lim_{\varepsilon\searrow0}\int_0^\infty\kappa^{\rho}K_{\mu}(\mp\i(x\pm\i\varepsilon)\kappa)J_{\nu}(\kappa)\d\kappa \\ = \ \pm \frac{2^{\rho}}{\i\pi} \e^{\pm\i\pi\frac{1+\rho-\mu+\nu}{2}}\Gamma({\textstyle \frac{1+\rho+\mu+\nu}{2}})\Gamma({\textstyle \frac{1+\rho-\mu+\nu}{2}})\lim_{\varepsilon\searrow0}f(x\pm\i\varepsilon),
\end{eqnarray*}
where
\[
f(x\pm\i\varepsilon):=(x\pm\i\varepsilon)^{-1-\rho-\nu} \ _{2}F_1^{\rm I}\left({\textstyle\frac{1+\rho+\mu+\nu}{2}},{\textstyle\frac{1+\rho-\mu+\nu}{2}};\nu+1;(x\pm\i\varepsilon)^{-2}\right).
\]

Since we have also the relation $J_{\mu}=\frac{1}{2}\left(H^{+}_{\mu}+H^{-}_{\mu}\right)$ \ \cite{W}, it follows that 
\begin{equation}\label{eq:wsprzezf}
\int_0^\infty\kappa^{\rho}J_{\mu}(x\kappa)J_{\nu}(\kappa)\d\kappa=\frac{2^{\rho-1}}{\i\pi}\Gamma({\textstyle \frac{1+\rho+\mu+\nu}{2}})\Gamma({\textstyle \frac{1+\rho-\mu+\nu}{2}})\left[\e^{\i\pi\frac{1+\rho-\mu+\nu}{2}}\lim_{\varepsilon\searrow0}f(x+\i\varepsilon)-\e^{-\i\pi\frac{1+\rho-\mu+\nu}{2}}\lim_{\varepsilon\searrow0}f(x-\i\varepsilon)\right].
\end{equation}
Therefore, in order to derive the Weber-Schafheitlin integral, as well as the integrals involving $H^{\pm}_{\mu}$, it is enough to examine the limit $\lim_{\varepsilon\searrow0}f(x\pm\i\varepsilon)$. Note that $f(z)$ depends on the parameters $\mu, \nu, \rho$ and it will follow that it is convenient to treat the cases $\Re\rho<0$ and $\Re\rho>0$ separately. 


\begin{proposition}\label{wynikfunkcja}For any $\mu,\nu\in\C$ and $\Re\rho<0$ satisfying $\Re\left(\rho+\nu+1\right)>\left|\mu\right|$, and $x\in\R_+$, the integral $\int_0^\infty\kappa^{\rho}J_{\mu}(x\kappa)J_{\nu}(\kappa)\d\kappa$ is equal to:
\[
2^{\rho}\frac{\Gamma(\frac{1+\rho+\mu+\nu}{2})}{\Gamma(\frac{1-\rho-\mu+\nu}{2})} x^{\mu} \ _{2}F_1^{\rm I}\left({\textstyle\frac{1+\rho+\mu+\nu}{2}},{\textstyle\frac{1+\rho+\mu-\nu}{2}};\mu+1;x^{-2}\right)
\]
for $x<1$, and
\[
2^{\rho}\frac{\Gamma(\frac{1+\rho+\mu+\nu}{2})}{\Gamma(\frac{1-\rho+\mu-\nu}{2})} x^{-1-\rho-\nu} \ _{2}F_1^{\rm I}\left({\textstyle\frac{1+\rho+\mu+\nu}{2}},{\textstyle\frac{1+\rho-\mu+\nu}{2}};\nu+1;x^{-2}\right)
\]
for $x>1$.
\end{proposition}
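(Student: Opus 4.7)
The plan is to evaluate the limits on the right-hand side of (\ref{eq:wsprzezf}) by analyzing how the analytic function
\[
f(z) = z^{-1-\rho-\nu} \ _{2}F_1^{\rm I}\!\left({\textstyle\frac{1+\rho+\mu+\nu}{2}},{\textstyle\frac{1+\rho-\mu+\nu}{2}};\nu+1;z^{-2}\right)
\]
behaves as $z$ approaches a point $x\in\R_+$ from above or below. Writing $a:=\frac{1+\rho+\mu+\nu}{2}$, $b:=\frac{1+\rho-\mu+\nu}{2}$ and $c:=\nu+1$, the only source of non-continuity is the branch cut of ${}_{2}F_1(a,b;c;w)$ at $w\in[1,\infty)$, which corresponds to $z$ real with $0<z\leq 1$. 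Hence the cases $x>1$ and $x<1$ behave differently and must be treated separately.

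For $x>1$, the point $x^{-2}$ lies inside the unit disk and away from the branch cut, so $f$ extends continuously to $z=x$ and both one-sided limits equal $f(x)$. The bracket in (\ref{eq:wsprzezf}) then collapses to $2\i\sin(\pi b)f(x)$, and the reflection identity $\Gamma(b)\sin(\pi b)=\pi/\Gamma(1-b)$ with $1-b=\frac{1-\rho+\mu-\nu}{2}$ yields directly the stated formula for $x>1$.

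For $x<1$ the argument $x^{-2}>1$ lies on the branch cut. I would invoke the classical connection formula (Erd\'elyi \S 2.10, or NIST DLMF~15.8.2)
\begin{eqnarray*}
{}_{2}F_1(a,b;c;w) & = & \frac{\Gamma(c)\Gamma(b-a)}{\Gamma(b)\Gamma(c-a)}(-w)^{-a}\,{}_{2}F_1(a,\,1+a-c;\,1+a-b;\,1/w) \\
& & +\ \frac{\Gamma(c)\Gamma(a-b)}{\Gamma(a)\Gamma(c-b)}(-w)^{-b}\,{}_{2}F_1(b,\,1+b-c;\,1+b-a;\,1/w)
\end{eqnarray*}
with $w=(x\pm\i\varepsilon)^{-2}$. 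The reciprocal $1/w=(x\pm\i\varepsilon)^{2}\ra x^{2}\in(0,1)$ lies in the convergence disk, so the inner ${}_{2}F_1$ factors are continuous at $\varepsilon=0$; only the prefactors $(-w)^{-a},(-w)^{-b}$ carry the branch information. Since $\Im(x+\i\varepsilon)^{-2}<0$, the point $-(x+\i\varepsilon)^{-2}$ approaches $-x^{-2}$ from the upper half-plane, so on the principal branch $\arg(-w)\ra\pi$, giving $(-w)^{-a}\ra x^{2a}\e^{-\i\pi a}$ and $(-w)^{-b}\ra x^{2b}\e^{-\i\pi b}$; the two phases flip sign for the approach from below. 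The key cancellation is now transparent: in the combination $\e^{\i\pi b}f_{+}-\e^{-\i\pi b}f_{-}$ the $(-w)^{-b}$ contribution acquires the factor $\e^{\i\pi b}\e^{-\i\pi b}-\e^{-\i\pi b}\e^{\i\pi b}=0$ and drops out, while the $(-w)^{-a}$ contribution acquires $\e^{-\i\pi(a-b)}-\e^{\i\pi(a-b)}=-2\i\sin(\pi\mu)$ since $a-b=\mu$. Collecting $x^{-1-\rho-\nu}\cdot x^{2a}=x^{\mu}$ and simplifying with the reflection identity $-\sin(\pi\mu)\Gamma(-\mu)=\pi/\Gamma(\mu+1)$, together with $1+a-c=\frac{1+\rho+\mu-\nu}{2}$, $1+a-b=\mu+1$ and $c-a=\frac{1-\rho-\mu+\nu}{2}$, produces exactly the claimed expression.

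The main obstacle I foresee is pure bookkeeping: keeping the branches of $(-w)^{-a},(-w)^{-b}$ consistent between the two boundary approaches and checking that the $(-w)^{-b}$ term cancels exactly. A secondary technical point is that the inner ${}_{2}F_1$ appearing in the final answer should stay finite up to $x=1$; this follows from the Gauss evaluation at $1$ because the relevant parameter satisfies $c-a-b=-\rho$ with $\Re(-\rho)>0$ under the assumption $\Re\rho<0$, consistent with the integral defining an honest locally integrable function in this regime rather than a true distribution.
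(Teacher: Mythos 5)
Your proof is correct, and for $x>1$ it coincides with the paper's: both of you observe that $x^{-2}$ lies off the branch cut of ${}_2F_1$, so the two boundary values of $f$ agree, the bracket in (\ref{eq:wsprzezf}) collapses to $2\i\sin\bigl(\pi\tfrac{1+\rho-\mu+\nu}{2}\bigr)f(x)$, and the reflection formula finishes the job. For $x<1$ you take a genuinely different route. The paper never touches the branch cut there: it invokes the scaling identity $\int_0^\infty \kappa^{\rho}J_{\mu}(x\kappa)J_{\nu}(\kappa)\,\d\kappa=x^{-\rho-1}\int_0^\infty \kappa^{\rho}J_{\mu}(\kappa)J_{\nu}(\kappa/x)\,\d\kappa$, which maps the case $x<1$ to the already-proved case with argument $1/x>1$ and $\mu,\nu$ interchanged, so no further analysis is needed. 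You instead stay on the cut and resolve the discontinuity via the $w\mapsto 1/w$ connection formula; your phase bookkeeping is right --- the $(-w)^{-b}$ contribution cancels identically in $\e^{\i\pi b}f_{+}-\e^{-\i\pi b}f_{-}$ and the surviving $(-w)^{-a}$ term carries $-2\i\sin(\pi\mu)$, which the reflection formula turns into $1/\Gamma(\mu+1)$. The paper's route is shorter and automatically covers $\mu\in\Z$, whereas the connection formula you quote degenerates when $a-b=\mu$ is an integer (both $\Gamma(b-a)$ and $\Gamma(a-b)$ blow up and a logarithmic variant is required), so strictly you would need to append a continuity-in-$\mu$ argument for that case. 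What your route buys is that it makes the mechanism of the jump across the cut explicit --- the same mechanism the paper only needs later, in Section 4, for $\Re\rho>0$. One final remark: your computation (correctly) produces ${}_2F_1$ with argument $x^{2}$ for $x<1$, in agreement with Watson's classical formula and with what the paper's own substitution argument yields; the $x^{-2}$ printed in the statement of the proposition for $x<1$ is evidently a typo, so you should not claim to have reproduced that expression literally.
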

\begin{proof}
Consider the factor $_{2}F_1^{\rm I}\left({\textstyle\frac{1+\rho+\mu+\nu}{2}},{\textstyle\frac{1+\rho-\mu+\nu}{2}};\nu+1;(x\pm\i\varepsilon)^{-2}\right)$, appearing in the definition of $f(x+\i\varepsilon)$.
Denoting $a:=\frac{1+\rho+\mu+\nu}{2}$, $b:=\frac{1+\rho-\mu+\nu}{2}$ and $c:=\nu+1$, we check that $\Re (c-a-b)=-\Re\rho<0$, which ensures the limit $\varepsilon\searrow0$ exists.

For $x>1$, the argument of the $_{2}F_1$ function has real part smaller than $1$, thus by analyticity it follows that the limits with $+\i\varepsilon$ and $-\i\varepsilon$ coincide. Obtaining the desired expression is then just a matter of rewriting the phase factors in terms of $\Gamma$ functions, using
\[
\frac{1}{2\i}\left(\e^{\i\pi\frac{1+\rho-\mu+\nu}{2}}-\e^{-\i\pi\frac{1+\rho-\mu+\nu}{2}}\right)=\sin\left({\textstyle\pi\frac{1+\rho-\mu+\nu}{2}}\right)=\frac{\pi}{\Gamma\left({\textstyle\frac{1+\rho+\nu-\mu}{2}}\right)\Gamma\left({\textstyle\frac{1-\rho-\nu+\mu}{2}}\right)}.
\]

For $x<1$, we can use the result above with $\mu$ and $\nu$ interchanged, thanks to
\[
\int_0^\infty \kappa^{\rho}J_{\mu}(x\kappa)J_{\nu}(\kappa)\d\kappa=x^{-\rho-1}\int_0^\infty \kappa^{\rho}J_{\mu}(\kappa)J_{\nu}(\kappa/x)\d\kappa.\] 
\qed  
\end{proof}

\section{The case $\Re\rho>0$} \label{sekcjahankel}

Assuming $\Re\rho>0$, we examine the limit
\[
\lim_{\varepsilon\searrow0}f(x\pm\i\varepsilon)=\lim_{\varepsilon\searrow0} \ \ (x\pm\i\varepsilon)^{-1-\rho-\nu} \ _{2}F_1^{\rm I}\left({\textstyle\frac{1+\rho+\mu+\nu}{2}},{\textstyle\frac{1+\rho-\mu+\nu}{2}};\nu+1;(x\pm\i\varepsilon)^{-2}\right).
\]
Denoting $a:=\frac{1+\rho+\mu+\nu}{2}$, $b:=\frac{1+\rho-\mu+\nu}{2}$ and $c:=\nu+1$, we check that $\Re (c-a-b)=-\Re\rho>0$, which implies a singular behaviour of the $_{2}F_1^{\rm I}$ factor at $x=1$ in the limit $\varepsilon\to 0$. We therefore use the following symmetry of the $_{2}F_1^{\rm I}$ function:
\begin{eqnarray*}
_{2}F_1^{\rm I}\left(a,b;c;z^{-2}\right)= (1-z^{-2})^{c-a-b} \ _{2}F_1^{\rm I}\left(c-a,c-b;c;z^{-2}\right) \\ \ = (1-z^{-2})^{-\rho} \ _{2}F_1^{\rm I}\left({\textstyle\frac{1-\rho+\mu+\nu}{2}},{\textstyle\frac{1-\rho-\mu+\nu}{2}};\nu+1;z^{-2}\right),
\end{eqnarray*}
in order to isolate the whole singular factor $(1-z^{-2})^{-\rho}$, the second term on the RHS being well defined for all $z\in\C$. Consequently,
\begin{eqnarray*}
f(x\pm\i\varepsilon)=(x\pm\i\varepsilon)^{-1-\rho-\nu}(1-(x\pm\i\varepsilon)^{-2})^{-\rho} \ _{2}F_1^{\rm I}\left({\textstyle\frac{1-\rho+\mu+\nu}{2}},{\textstyle\frac{1-\rho-\mu+\nu}{2}};\nu+1;(x\pm\i\varepsilon)^{-2}\right)
\\ = (x\pm\i\varepsilon)^{-1+\rho-\nu}(x+1\pm\i\varepsilon)^{-\rho}(x-1\pm\i\varepsilon)^{-\rho} \ _{2}F_1^{\rm I}\left({\textstyle\frac{1-\rho+\mu+\nu}{2}},{\textstyle\frac{1-\rho-\mu+\nu}{2}};\nu+1;(x\pm\i\varepsilon)^{-2}\right).
\end{eqnarray*}
Because of the singular part (i.e. $(x-1\pm\i\varepsilon)^{-\rho}$), the limit $\varepsilon\searrow0$ requires a distributional approach. 
Let us concentrate now on the first factors of the above expression. We will need the following:
\begin{definition}
We define $\left(x-1\pm\i0 \right)^{\lambda}:=\lim_{\varepsilon\searrow0}(x-1\pm\i\varepsilon)^{\lambda}$, as distributions on $\R_+$.
\end{definition}
\begin{remark}\label{remark1}
The distributions $(x-1\pm\i0)^{\lambda}$ are well defined and can be represented in a more explicit way as follows (see \cite{H} for properties of the analogously defined distributions $(x\pm\i0)^\lambda$ on $\R$):
\begin{innerlist}
\item for $\Re\lambda>-1$, we have the equality between locally integrable functions 
\[
(x-1\pm\i0)^{\lambda}= \e^{\pm\i\lambda\pi}(x-1)^{\lambda}_- +(x-1)^{\lambda}_+,  
\]
where:
\[
(x-1)^{\lambda}_{-}:=\begin{cases}|x-1|^{\lambda},& x<1\\ 0, & x\geq1 \end{cases},
\]
\[
(x-1)^{\lambda}_{+}:=\begin{cases}0,& x<1\\ (x-1)^{\lambda}, & x\geq1 \end{cases};
\]
\item for $\Re\lambda>-k$ and $\lambda\notin\Z$, where $k\in\N$, we have the equality
\[
(x-1\pm\i0)^{\lambda}= \e^{\pm\i\lambda\pi}(x-1)^{\lambda}_- +(x-1)^{\lambda}_+,  
\]
where $(x-1)^{\lambda}_{\pm}$ are now distributions defined by their action on an arbitrary test function $\phi\in \cc^{\infty}_c(\R_+)$ (or equivalently, as the analytic continuation of the distributions defined in the preceding case):
\[
\left\langle (x-1)^{\lambda}_{-} , \phi \right\rangle:= \int_0^1 (1-x)^{\lambda+k}\phi^{(k)}(x)/\left((\lambda+1)\ldots(\lambda+k)\right) \d x ,
\]
\[
\left\langle (x-1)^{\lambda}_{+} , \phi \right\rangle:=(-1)^k \int_1^\infty (x-1)^{\lambda+k}\phi^{(k)}(x)/\left((\lambda+1)\ldots(\lambda+k)\right) \d x .
\]
\item for $\lambda=-k$, where $k\in\N$, we have
\[
(x-1\pm\i0)^{-k}:=(x-1)^{-k}_- +(-1)^{k}(x-1)^{-k}_+ \pm (-1)^{k}\i\pi\frac{\delta^{(k-1)}(x-1)}{(k-1)!},
\]
where
\[
\left\langle (x-1)^{-k}_{-} , \phi \right\rangle:=(-1)^{k-1}\int_0^1 \log(1-x)\frac{\phi^{(k)}(x)}{(k-1)!}\d x + (-1)^{k-1} \frac{\phi^{(k-1)}(1)\left(\sum_{j=1}^{k-1}j^{-1}\right)}{(k-1)!},
\]
\[
\left\langle (x-1)^{-k}_{+} , \phi \right\rangle:=-\int_1^{\infty} \log(x-1)\frac{\phi^{(k)}(x)}{(k-1)!}\d x + \frac{\phi^{(k-1)}(1)\left(\sum_{j=1}^{k-1}j^{-1}\right)}{(k-1)!}.
\]
\end{innerlist}
\end{remark}
\begin{lemma}
Let $\lambda\in\C$ and let $g(z)$ be a function holomorphic in the neighborhood of the halfline $\R_+$. Then
\[
\lim_{\varepsilon\searrow0} \left[g(x+\i\varepsilon)(x-1\pm\i\varepsilon)^{\lambda}\right]=g(x)(x-1\pm\i 0)^{\lambda},
\]
in the sense of distributions on $\R_+$.\label{l:lemat1}
\end{lemma}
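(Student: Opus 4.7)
The plan is to reduce the statement to the distributional convergence $(x-1\pm\i\varepsilon)^\lambda\to(x-1\pm\i0)^\lambda$ in $\cD'(\R_+)$---which is precisely how the RHS is defined in the preceding Definition and Remark \ref{remark1}---by decoupling the perturbation of $g$ from that of $(x-1\pm\i\varepsilon)^\lambda$.

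First I would extend $g$ to the complex neighbourhood of $\R_+$ on which it is holomorphic. Cauchy estimates on small discs centred at points of an arbitrary compact $K\subset\R_+$ then yield that $g_\varepsilon:=g(\cdot+\i\varepsilon)$ converges, together with all of its $x$-derivatives, uniformly on $K$ to $g$ and its derivatives; more precisely $g_\varepsilon-g=O(\varepsilon)$ in every $C^N(K)$-seminorm. Next I would split
\[
g(x+\i\varepsilon)(x-1\pm\i\varepsilon)^\lambda = g(x)(x-1\pm\i\varepsilon)^\lambda + \bigl(g_\varepsilon(x)-g(x)\bigr)(x-1\pm\i\varepsilon)^\lambda.
\]
The first summand converges to $g(x)(x-1\pm\i0)^\lambda$ in $\cD'(\R_+)$ by the very definition of $(x-1\pm\i0)^\lambda$ combined with continuity of multiplication by the fixed smooth function $g|_{\R_+}$.

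The substance of the proof is the vanishing of the second summand. Testing against $\phi\in\cc^\infty_c(\R_+)$, picking $k\in\N$ with $\Re(\lambda+k)>-1$, and using the integration-by-parts representation underlying Remark \ref{remark1}, I would write
\[
\bigl\langle(x-1\pm\i\varepsilon)^\lambda,(g_\varepsilon-g)\phi\bigr\rangle = \frac{(-1)^k}{(\lambda+1)\cdots(\lambda+k)}\int_0^\infty(x-1\pm\i\varepsilon)^{\lambda+k}\,\partial_x^k\bigl[(g_\varepsilon-g)\phi\bigr]\d x.
\]
The integrand is supported in a fixed compact $K\subset\R_+$; the singular factor $(x-1\pm\i\varepsilon)^{\lambda+k}$ is dominated by $|x-1|^{\Re\lambda+k}\in L^1_{\rm loc}(\R_+)$ uniformly in $\varepsilon$ by the choice of $k$, while $\partial_x^k[(g_\varepsilon-g)\phi]=O(\varepsilon)$ uniformly on $K$. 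Dominated convergence then gives an $O(\varepsilon)$ bound, hence vanishing.

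The main obstacle I expect is the exceptional case $\lambda=-k\in-\N$, where the representation of $(x-1\pm\i\varepsilon)^{-k}$ involves logarithmic integrals and a $\delta^{(k-1)}(x-1)$ piece: the log-part is handled by the same dominated convergence (since $\log(x-1\pm\i\varepsilon)$ is locally $L^1$ with a $\varepsilon$-uniform bound), and the delta contribution evaluates derivatives of $(g_\varepsilon-g)\phi$ at $x=1$, which are still $O(\varepsilon)$ by the first step. Alternatively, since both sides depend continuously on $\lambda$ off the integer points, one can obtain the negative integer case as a limit of the generic one already established.
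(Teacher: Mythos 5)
Your proof is correct, but it follows a genuinely different route from the paper's. The paper starts from the regular case $\Re\lambda>-1$, where both factors converge uniformly on compacts and the limit is a locally integrable function, and then proceeds by induction on strips of $\Re\lambda$: it differentiates the already-established identity in $\cD'(\R_+)$, uses $\frac{\d}{\d x}(x-1\pm\i 0)^{\lambda}=\lambda(x-1\pm\i 0)^{\lambda-1}$ to step from $\lambda$ down to $\lambda-1$, subtracts the known term $g'(x)(x-1\pm\i0)^\lambda$, and handles the negative integers separately by differentiating the corresponding identity for $g(x+\i\varepsilon)\log(x-1\pm\i\varepsilon)$. You instead give a direct, non-inductive argument: split off $\bigl(g_\varepsilon-g\bigr)(x-1\pm\i\varepsilon)^\lambda$, dispose of the main term $g(x)(x-1\pm\i\varepsilon)^\lambda$ by the definitional convergence of $(x-1\pm\i\varepsilon)^\lambda$ together with sequential continuity of multiplication by the fixed smooth function $g|_{\R_+}$, and kill the error term by integrating by parts $k$ times against the smooth (for $\varepsilon>0$) function $(x-1\pm\i\varepsilon)^{\lambda+k}$ and using the $\varepsilon$-uniform $L^1_{\rm loc}$ domination $|x-1\pm\i\varepsilon|^{\Re\lambda+k}\le C\bigl(1+|x-1|^{\Re\lambda+k}\bigr)$ together with the Cauchy-estimate bound $\partial_x^k[(g_\varepsilon-g)\phi]=O(\varepsilon)$. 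Your version buys a quantitative $O(\varepsilon)$ rate and avoids the case-by-case induction; the paper's version is shorter because it recycles Remark \ref{remark1} and the derivative identity from \cite{H} wholesale. Two small caveats on your write-up: for $\varepsilon>0$ the pairing with $(x-1\pm\i\varepsilon)^{-k}$ is \emph{entirely} a logarithmic integral --- there is no delta term before the limit, so that part of your discussion of the exceptional case is superfluous (though harmless, since the log-integral estimate already covers everything); and the suggested alternative via continuity in $\lambda$ would need a uniformity-in-$\varepsilon$ argument that you do not supply, so it should not be relied on --- but your primary argument does not need it.
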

\proof It is clear that 
\begin{equation}\label{eq:limit1}     
\lim_{\varepsilon\searrow0} \left[g(x+\i\varepsilon)(x-1\pm\i\varepsilon)^{\lambda}\right]=g(x)(x-1\pm\i 0)^{\lambda},
\end{equation}
for $\Re\lambda>-1$, since the RHS is then a regular distribution and the convergence of each of the factors as functions is uniform on every compact subset of $\R_+$. Assuming $\lambda\neq 0$, we differentiate (\ref{eq:limit1}) and obtain
\[
\lim_{\varepsilon\searrow0} \left[g'(x+\i\varepsilon)(x-1\pm\i\varepsilon)^{\lambda} +\lambda g(x+\i\varepsilon)(x-1\pm\i\varepsilon)^{\lambda-1}\right]
=g'(x)(x-1\pm\i 0)^{\lambda}
+\lambda g(x)(x-1\pm\i0)^{\lambda-1},
\]
where we have used the fact that $\frac{\d}{\d x}(x-1\pm\i 0)^{\lambda}=\lambda(x-1\pm\i 0)^{\lambda-1}$ \ \cite{H}. We then use (\ref{eq:limit1}) again to substract the first term of both sides. We have thus proved (\ref{eq:limit1}) for $\Re\lambda>-2, \lambda\neq -1$. 

We consider the case $\lambda=-1$ separately, and differentiate instead the equality between the locally integrable functions:
\[
\lim_{\varepsilon\searrow0} \left[g(x+\i\varepsilon)\log(x-1\pm\i\varepsilon)\right]=g(x)\log(x-1\pm\i0),
\]
where $\log(x-1\pm\i0):=\lim_{\varepsilon\searrow0}\log(x-1\pm\i\varepsilon)=\log\left|x-1\right|\pm\i\pi\theta(x-1)$, and its distributional derivative is $(x-1\pm\i0)^{-1}$ (this can be seen by setting first $g(x)\equiv1$ in the above equality and differentiating).

By induction, we prove (\ref{eq:limit1}) for the remaining values of $\lambda\in\C$.
\qed

Recalling our expression for $f(x+\i\varepsilon)$, we have
\[
f(x+\i\varepsilon)=(x+\i\varepsilon)^{-1+\rho-\nu}(1+x+\i\varepsilon)^{-\rho}(x-1+\i\varepsilon)^{-\rho}q(x+\i\varepsilon),
\] 
where 
\[
q(x+\i\varepsilon):= \ _{2}F_1^{\rm I}\left({\textstyle \frac{1-\rho+\mu+\nu}{2}},{\textstyle\frac{1-\rho-\mu+\nu}{2}};\nu+1;(x+\i\varepsilon)^{-2}\right),
\]
which converges uniformly on every compact subset of $\R_+$ to
\[
q(x+\i0):= \ _{2}F_1^{\rm I}\left({\textstyle \frac{1-\rho+\mu+\nu}{2}},{\textstyle\frac{1-\rho-\mu+\nu}{2}};\nu+1;x^{-2}\right),
\]
considered as a single-valued function. Note that the Gauss hypergeometric function $_{2}F_1$ has a branch cut $\left[\right.1,\infty\left[\right.$. Here, the limit is taken by approaching the halfline from below ($\Im(x+\i\varepsilon)^{-2}<0$), and we have denoted $_{2}F_1(a,b;c;x)=\lim_{\epsilon\searrow0} \ _{2}F_1(a,b;c;x-\i\epsilon)$ on the branch cut. On the other hand, the limit $\lim_{\varepsilon\searrow0} f(x-\i\varepsilon)$ corresponds to approaching the halfline from above in the argument of the $_{2}F_1$ factor, therefore
\[
q(x-\i0):=\lim_{\varepsilon\searrow0}q(x-\i\varepsilon)
\]
is not equal to $q(x+\i0)$. 

\begin{remark}\label{remark2}
The function $x\mapsto q(x\pm\i0)$ is not differentiable, and therefore the meaning of the product $(x-1\pm\i0)^{-\rho} q(x\pm\i0)$ is unclear. To prove that it exists despite this apparent problem, we show that $q(x\pm\i0)$ can be written as
\begin{equation}\label{eq:rozklad}
q(x\pm\i0)=h_1(x)+(x-1\pm\i0)^{\rho}h_2^{\pm}(x),
\end{equation}
where $h_1(x)$ is smooth and both $h_2^{+}(x)$, $h_2^{-}(x)$ belong to $L_1^{{\rm loc}}(\R_+)$. Then, the equality
\[
(x-1\pm\i0)^{-\rho} \ q(x\pm\i0):=(x-1\pm\i0)^{-\rho}h_1(x)+h_2^{\pm}(x)
\]
defines the desired product well, being the sum of a distribution multiplied by a smooth function and of a locally integrable function.\end{remark}
\begin{proof}For $\rho\notin\Z$, the decomposition (\ref{eq:rozklad}) is possible due to the following formula, holding for $z\in\C$ (\cite{BS}, eq. (B.9)):
\begin{eqnarray}\label{eq:decomposition}
_{2}F_1^{\rm I}\left({\textstyle\frac{1-\rho+\mu+\nu}{2}},{\textstyle\frac{1-\rho-\mu+\nu}{2}};\nu+1;z\right)= \frac{\pi}{\sin\pi\rho} \bigg\{ \frac{1}{\Gamma({\textstyle \frac{1+\rho+\mu+\nu}{2}})\Gamma({\textstyle \frac{1+\rho-\mu+\nu}{2}})} \ _{2}F_1^{\rm I}\left({\textstyle \frac{1-\rho+\mu+\nu}{2}},{\textstyle\frac{1-\rho-\mu+\nu}{2}};-\rho+1;1-z\right) \nonumber \\ - \ \frac{1}{\Gamma({\textstyle \frac{1-\rho+\mu+\nu}{2}})\Gamma({\textstyle \frac{1-\rho-\mu+\nu}{2}})} (1-z)^{\rho}\ _{2}F_1^{\rm I}\left({\textstyle \frac{1+\rho+\mu+\nu}{2}},{\textstyle\frac{1+\rho-\mu+\nu}{2}};\rho+1;1-z\right)\bigg\}.
\end{eqnarray}
We take $z=(x\pm\i\varepsilon)^{-2}$ and pass to the limit $\varepsilon\searrow0$ with both the expressions. Note that the only difference between the limit with $+\i\varepsilon$ and $-\i\varepsilon$ appears in the factor  $\lim_{\varepsilon\searrow0}(1-(x\pm\i\varepsilon)^{-2})^\rho=x^{-2\rho}(1+x)^{\rho}(x-1\pm\i0)^{\rho}$, since the $_{2}F_1$ factors on the RHS are analytic on the required domain. It is clear that (\ref{eq:rozklad}) holds, with $h_2^{+}(x)=h_2^{-}(x)$ in particular. 

For $\rho\in\Z$, we have instead (\cite{BS}, eq. (B.10)):
\begin{eqnarray*}
_{2}F_1^{\rm I}\left({\textstyle\frac{1-\rho+\mu+\nu}{2}},{\textstyle\frac{1-\rho-\mu+\nu}{2}};\nu+1;z\right)= \\  \frac{1}{\Gamma({\textstyle \frac{1+\rho+\mu+\nu}{2}})\Gamma({\textstyle \frac{1+\rho-\mu+\nu}{2}})}\sum_{k=0}^{\rho-1}\frac{(-1)^k (\rho-k-1)!\left({\textstyle \frac{1-\rho+\mu+\nu}{2}}\right)_k \left({\textstyle \frac{1-\rho-\mu+\nu}{2}}\right)_k}{k!}(1-z)^k + \\ \ \frac{(-1)^\rho}{\Gamma({\textstyle \frac{1-\rho+\mu+\nu}{2}})\Gamma({\textstyle \frac{1-\rho-\mu+\nu}{2}})}(1-z)^{\rho}\sum_{k=0}^{\infty}\frac{\left({\textstyle \frac{1+\rho+\mu+\nu}{2}}\right)_k \left({\textstyle \frac{1+\rho-\mu+\nu}{2}}\right)_k}{(\rho+k)!k!}\left\{\psi(k+1)+\psi(\rho+k+1)\right. \\ \left.-\psi({\textstyle \frac{1+\rho+\mu+\nu}{2}}+k)-\psi({\textstyle \frac{1+\rho-\mu+\nu}{2}}+k)-\log(1-z)\right\}(1-z)^k,
\end{eqnarray*}
for ${\textstyle\frac{1-\rho\pm\mu+\nu}{2}}\notin\Z$, where $\psi(y)=\Gamma'(y)/\Gamma(y)$. 

As in the preceding case, we take $z=(x\pm\i\varepsilon)^{-2}$ and pass to the limit. We obtain a similar decomposition. Note, however, that the limits $\log(1-x\mp\i0)$ differ, and thus $h_2^{+}(x)\neq h_2^{-}(x)$  

 The cases where at least one of the parameters ${\textstyle\frac{1-\rho\pm\mu+\nu}{2}}$ is an integer are treated the same way, only the functions $h_1(x)$ $h_2^{\pm}(x)$ being then different (\cite{BS}, eq. (B.11), (B.12)).\qed
\end{proof}  

\begin{proposition}\label{eq:wynik1} In the sense of distributions on $\R_+$,
\[
\lim_{\varepsilon\searrow0}f(x\pm\i\varepsilon)=x^{-1+\rho-\nu}(x+1)^{-\rho}\left[(x-1\pm\i0)^{-\rho}h_1(x)+h_2^{\pm}(x)\right],
\]
where the functions $h_1(x)$ and $h_2^{\pm}(x)$ are defined in Remark \ref{remark2}.
\end{proposition}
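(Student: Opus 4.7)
The plan is to combine the factorization of $f(x\pm\i\varepsilon)$ displayed just before the proposition with the decomposition from the proof of Remark \ref{remark2}, but applied at finite $\varepsilon>0$ rather than only in the limit. The point is to postpone the limit $\varepsilon\searrow 0$ until after the singular factor $(x-1\pm\i\varepsilon)^{-\rho}$ has been separated from a coefficient holomorphic in a neighborhood of $\R_+$, so that Lemma \ref{l:lemat1} applies to their product while the remainder converges locally uniformly. This is necessary because $q(x\pm\i\varepsilon)$ by itself does not converge smoothly to $q(x\pm\i 0)$ — it has a branch cut — so one cannot apply Lemma \ref{l:lemat1} directly to the whole $f$.

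Concretely, I would first write $f(x\pm\i\varepsilon)=G^{\pm}_{\varepsilon}(x)(x-1\pm\i\varepsilon)^{-\rho}q(x\pm\i\varepsilon)$ with $G^{\pm}_{\varepsilon}(x):=(x\pm\i\varepsilon)^{-1+\rho-\nu}(x+1\pm\i\varepsilon)^{-\rho}$, holomorphic near $\R_+$ for $\varepsilon$ small. Next, the identity (\ref{eq:decomposition}) applied at $z=(x\pm\i\varepsilon)^{-2}$ splits
\[
q(x\pm\i\varepsilon)=A(x\pm\i\varepsilon)+\bigl(1-(x\pm\i\varepsilon)^{-2}\bigr)^{\rho}B(x\pm\i\varepsilon),
\]
where $A$ and $B$ stand for the two $\Gamma$-prefactored $_{2}F_1^{\rm I}$ terms in the variable $1-z$ on the right-hand side of (\ref{eq:decomposition}); both are holomorphic in a neighborhood of $\R_+$ since $1-(x\pm\i\varepsilon)^{-2}$ avoids the branch cut $[1,\infty)$ for $x>0$ and small $\varepsilon$. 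The algebraic key is the branch-consistent factorization
\[
\bigl(1-(x\pm\i\varepsilon)^{-2}\bigr)^{\rho}=(x\pm\i\varepsilon)^{-2\rho}(x+1\pm\i\varepsilon)^{\rho}(x-1\pm\i\varepsilon)^{\rho},
\]
which holds for principal branches because the imaginary parts of all factors carry consistent signs along $\R_+$ for small $\varepsilon>0$ (a short argument-counting check, routine for $x>1$ and requiring a little more care for $x<1$, where $(x-1\pm\i\varepsilon)^{\rho}$ contributes the full $\pm\rho\pi$ matched on the left by the principal branch of $(1-x^{-2})^{\rho}$). Substituting gives
\[
f(x\pm\i\varepsilon)=G^{\pm}_{\varepsilon}(x)A(x\pm\i\varepsilon)(x-1\pm\i\varepsilon)^{-\rho}+(x\pm\i\varepsilon)^{-1-\rho-\nu}B(x\pm\i\varepsilon).
\]
Lemma \ref{l:lemat1}, applied to the first summand with holomorphic coefficient $G^{\pm}_{\varepsilon}A$, produces the distributional limit $x^{-1+\rho-\nu}(x+1)^{-\rho}A(x)(x-1\pm\i 0)^{-\rho}$; the second summand is itself holomorphic near $\R_+$ and converges locally uniformly to $x^{-1-\rho-\nu}B(x)$. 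Setting $h_{1}(x):=A(x)$ and $h_{2}^{\pm}(x):=x^{-2\rho}(x+1)^{\rho}B(x)$ — which is forced by comparing the limit with the decomposition of $q(x\pm\i 0)$ in Remark \ref{remark2} — collects the two contributions into the claimed form $x^{-1+\rho-\nu}(x+1)^{-\rho}[(x-1\pm\i 0)^{-\rho}h_{1}(x)+h_{2}^{\pm}(x)]$.

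The main obstacle I anticipate is the integer case $\rho\in\Z$, where (\ref{eq:decomposition}) is replaced by the expansion containing an additional $\log(1-z)$ factor. Setting $z=(x\pm\i\varepsilon)^{-2}$, the boundary values of $\log(1-(x\pm\i\varepsilon)^{-2})$ differ by $\mp 2\pi\i\,\theta(x-1)$ between the two sign choices, which is precisely the mechanism producing $h_{2}^{+}\neq h_{2}^{-}$ in that regime. Since $\log(1-z)$ is locally integrable, it still contributes only to the smooth-in-$x$ coefficients of the splitting of $q$; Lemma \ref{l:lemat1} applied to the singular summand and uniform convergence applied to the regular summand then close the argument exactly as above, with the $\pm$-dependent logarithmic boundary value absorbed into $h_{2}^{\pm}$. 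The remaining degenerate cases (where an additional $\frac{1-\rho\pm\mu+\nu}{2}$ lies in $\Z$) are handled identically, only the explicit formulas for $h_{1}$ and $h_{2}^{\pm}$ changing.
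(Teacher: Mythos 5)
Your proposal is correct and follows essentially the same route as the paper: split $q(x\pm\i\varepsilon)$ at finite $\varepsilon$ via the connection formula (\ref{eq:decomposition}) into a part holomorphic near $\R_+$ multiplying $(x-1\pm\i\varepsilon)^{-\rho}$, handled by Lemma \ref{l:lemat1}, and a remainder converging locally uniformly. You merely spell out the branch-consistency of the factorization and the identification of $h_1$, $h_2^{\pm}$ more explicitly than the paper's terse two-line argument.
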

\begin{proof}
We have to prove that as $\varepsilon\searrow0$,
\[
(x-1\pm\i\varepsilon)^{-\rho}(x\pm\i\varepsilon)^{-1+\rho-\nu}(x+1\pm\i\varepsilon)^{-\rho}h_1(x\pm\i\varepsilon) \longrightarrow (x-1\pm\i0)^{-\rho}x^{-1+\rho-\nu}(x+1)^{-\rho}h_1(x)
\]
and
\[
(x\pm\i\varepsilon)^{-1+\rho-\nu}(x+1\pm\i\varepsilon)^{-\rho}(x-1\pm\i\varepsilon)^{-\rho}(x-1\pm\i\varepsilon)^{\rho}h_2(x\pm\i\varepsilon) \longrightarrow x^{-1+\rho-\nu}(1+x)^{-\rho}h_2^{\pm}(x).
\]
The first limit is a consequence of Lemma \ref{l:lemat1}. The second one is clearly true, since the convergence of the corresponding functions is uniform on each compact subset of $\R_+$.
\qed
\end{proof}

\begin{remark}
The case when $\rho\in\Z$ and at least one of the numbers ${\textstyle\frac{1-\rho\pm\mu+\nu}{2}}$ is an integer are treated similarly. One can deduce from the expansions given in \cite{BS} (eq. (B.11), (B.12)), the explicit expressions for the functions $h_1(x)$ and $h_2^{\pm}(x)$, following step by step the proof of Remark \ref{remark2} in the degenerate case. 
\end{remark}

\begin{corollary}Using Equation \ref{eq:wsprzezf} and Proposition \ref{eq:wynik1}, we get the results gathered in Proposition \ref{wynik2}. The formulae in Proposition \ref{wynik2} are written in terms of the distributions $(x-1)_{\pm}^{-\rho}$ rather than $(x-1\pm\i0)^{-\rho}$, using the relations listed in Remark \ref{remark1}.
\end{corollary}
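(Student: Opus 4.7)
The statement is essentially bookkeeping: substitute the two formulas of Proposition \ref{eq:wynik1} into the identity (\ref{eq:wsprzezf}), then replace each $(x-1\pm\i 0)^{-\rho}$ by its explicit expansion from Remark \ref{remark1}, and collect the resulting phase factors into sines and cosines (with a Dirac mass appearing in the integer case). The only substantive computation is to read off $h_1(x)$ and $h_2^{\pm}(x)$ from the decomposition (\ref{eq:decomposition}) when $\rho\notin\Z$, or from its integer analogue (eq.~(B.10) of \cite{BS}) when $\rho\in\Z$, and then to track carefully the Gamma-function denominators and the phases $\e^{\pm\i\pi(1+\rho-\mu+\nu)/2}$ from (\ref{eq:wsprzezf}).

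For $\rho\notin\Z$, abbreviating $\Gamma_\pm := \Gamma\!\left(\frac{1+\rho\pm\mu+\nu}{2}\right)$ and $\widetilde\Gamma_\pm := \Gamma\!\left(\frac{1-\rho\pm\mu+\nu}{2}\right)$, and using $(1-(x\pm\i\varepsilon)^{-2})^{\rho}\to x^{-2\rho}(x+1)^{\rho}(x-1\pm\i 0)^{\rho}$ to extract the singular factor out of (\ref{eq:decomposition}), one identifies
\[h_1(x) = \frac{\pi/\sin\pi\rho}{\Gamma_+\Gamma_-}\ {}_2F_1^{\rm I}\!\left({\textstyle \frac{1-\rho+\mu+\nu}{2},\frac{1-\rho-\mu+\nu}{2};1-\rho;1-x^{-2}}\right),\]
while $h_2^{+}(x)=h_2^{-}(x)$ is the analogous expression in which $\Gamma_+\Gamma_-$ is replaced by $-\widetilde\Gamma_+\widetilde\Gamma_-$, the third hypergeometric parameter becomes $\rho+1$, and a further factor $x^{-2\rho}(x+1)^{\rho}$ is inserted. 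In (\ref{eq:wsprzezf}) the $h_2$-contributions collapse via $\e^{\i\pi\alpha}-\e^{-\i\pi\alpha}=2\i\sin\pi\alpha$ with $\alpha:=\frac{1+\rho-\mu+\nu}{2}$, yielding directly the second summand of Proposition \ref{wynik2}. For the $h_1$-terms I would substitute $(x-1\pm\i 0)^{-\rho}=\e^{\mp\i\pi\rho}(x-1)^{-\rho}_-+(x-1)^{-\rho}_+$ and recombine exponentials, producing $\sin(\pi\tfrac{1-\rho-\mu+\nu}{2})(x-1)^{-\rho}_-+\sin(\pi\tfrac{1+\rho-\mu+\nu}{2})(x-1)^{-\rho}_+$; multiplication by $2^\rho/\sin\pi\rho$ and by the $h_1$-hypergeometric reconstitutes the first summand.

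For $\rho\in\Z$ and $\frac{1-\rho\pm\mu+\nu}{2}\notin\Z$ the same strategy applies with eq.~(B.10) of \cite{BS} in place of (\ref{eq:decomposition}). Now $h_1(x)$ reduces (up to Gamma factors and a trivial combinatorial rewriting of $(-1)^k(\rho-k-1)!$ as $(\rho-1)!/(1-\rho)_k$) to a multiple of the finite polynomial $S_{\mu,\nu,\rho}(1-x^{-2})$, while the Frobenius series $T_{\mu,\nu,\rho}$ together with $\log(1-(x\pm\i 0)^{-2})=\log(x^{-2}(x+1)|x-1|)\pm\i\pi\,\mbox{(step function in $1-x$)}$ constitute $h_2^{\pm}$; in particular $h_2^{+}\ne h_2^{-}$, the discrepancy being purely the $\i\pi$-jump of the logarithm. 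Moreover, the integer expansion $(x-1\pm\i 0)^{-\rho}=(x-1)^{-\rho}_-+(-1)^{\rho}(x-1)^{-\rho}_+\pm(-1)^{\rho}\i\pi\delta^{(\rho-1)}(x-1)/(\rho-1)!$ from Remark \ref{remark1} contributes a Dirac mass: combined with the phases $\e^{\pm\i\pi\alpha}$ via $\e^{\i\beta}+\e^{-\i\beta}=2\cos\beta$ (the extra $\i$ inside the $\delta$-piece converting the cancellation into addition relative to the regular terms), this produces the cosine-prefactor in front of $\delta^{(\rho-1)}(1-x)/(\rho-1)!$, after the identity $\cos(\pi\tfrac{1+\rho-\mu+\nu}{2})=-(-1)^\rho\cos(\pi\tfrac{1+\rho+\mu-\nu}{2})$ (which holds because $\rho\in\Z$) is invoked to convert to the precise sign pattern in the statement.

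The main obstacle is the integer case bookkeeping: four sign patterns must be tracked simultaneously, because the regular part of the logarithm appears symmetrically in $h_2^{+}+h_2^{-}$ while the step function enters antisymmetrically in $h_2^{+}-h_2^{-}$, and each is paired with a different phase from (\ref{eq:wsprzezf}), producing both a sine and a cosine in the third bracket of the integer formula. One must also check that the overall prefactor becomes $2^\rho/\pi$ rather than $2^\rho/\sin\pi\rho$, the $\sin\pi\rho$ having been absorbed into the integer decomposition (B.10). As a useful consistency test, multiplying the non-integer formula by $\sin\pi\rho$ and letting $\rho\to n\in\Z$ (by L'Hôpital on both sides) should recover the integer formula multiplied by $\pi$.
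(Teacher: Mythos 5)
Your proposal is correct and follows exactly the route the paper intends: the corollary's "proof" in the paper is nothing more than the instruction to substitute Proposition \ref{eq:wynik1} into (\ref{eq:wsprzezf}) and rewrite via Remark \ref{remark1}, which is precisely the bookkeeping you carry out (and in considerably more detail than the paper itself provides). Your phase recombinations, the identification of $h_1$ and $h_2^{\pm}$, and the sign identity for the cosine prefactor of the delta term all check out against the stated formulae.
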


We end up commenting on some special cases, involving much simplier expressions.

\begin{remark}
An explicit formula in the special case $\rho=1$ has been derived in \cite{KR}. It can be recovered from our general expression, by substituting the (well-known) equality
\[
(x-1\pm\i0)^{-1}=\Pv \left(\frac{1}{x-1}\right)\mp\i\pi\delta (x-1),
\]
where $\Pv$ denotes the Cauchy principal value. Furthermore, $S_{\mu,\nu,1}(x)\equiv1$ by definition, and it remains to use (\ref{eq:decomposition}) back again to get $ _{2}F_1$ functions with argument $x^{\pm2}$ instead of $1-x^{\pm2}$.
\end{remark}

\begin{remark}
Much simplier formulae can be derived in the special case $1-\rho+\mu\pm\nu=0$, since the Gauss hypergeometric function with a parameter set to zero is trivial, i.e. $_{2}F_1(0,\ldots;\cdot)\equiv1$.
\end{remark}

Note that the above remarks hold for the integral involving $H_{\mu}^{\pm}$ instead of $J_{\mu}$ as well.


\ \\
\hrule
\small
\textsc{Micha{\l} Wrochna},  \emph{Research Training Group ``Mathematical Structures in Modern Quantum Physics'',
\\ Mathematisches Institut, Universit\"at G\"ottingen, Bunsenstr. 3-5, D - 37073 G\"ottingen, Germany
\vspace{0.4\baselineskip}
\\ e-mail: \upshape{wrochna@uni-math.gwdg.de}}

\end{document}